\newcommand{\bel}[1]{\begin{equation}\label{#1}}
\newcommand{\be}{\begin{equation}}
\newcommand{\ba}{\begin{eqnarray}}
\newcommand{\ea}{\end{eqnarray}}
\newcommand{\qe}{\end{equation}}
\newcommand{\Deg}{\mathrm{Deg}}
\newcommand{\Ric}{{\textrm Ric}}
\newcommand{\Hmm}[1]{\leavevmode{\marginpar{\tiny%
$\hbox to 0mm{\hspace*{-0.5mm}$\leftarrow$\hss}%
\vcenter{\vrule depth 0.1mm height 0.1mm width \the\marginparwidth}%
\hbox to
0mm{\hss$\rightarrow$\hspace*{-0.5mm}}$\\\relax\raggedright #1}}}
\newtheorem{theorem}{Theorem}[section]
\newtheorem{corollary}[theorem]{Corollary}
\newtheorem{remark}[theorem]{Remark}
\newtheorem{prop}[theorem]{Proposition}
\begin{document}

\title[Time analyticity of ancient solutions to the heat equation on graphs]{Time analyticity of ancient solutions to the heat equation on graphs}

\author{Fengwen Han}
\email{\href{mailto:19110180022@fudan.edu.cn}{19110180022@fudan.edu.cn}}
\address{Fengwen, Han: School of Mathematical Sciences, Fudan University, Shanghai 200433, China}

\author{Bobo Hua}
\email{\href{mailto:bobohua@fudan.edu.cn}{bobohua@fudan.edu.cn}}
\address{Bobo Hua: School of Mathematical Sciences, LMNS, Fudan University, Shanghai 200433, China; Shanghai Center for Mathematical Sciences, Fudan University, Shanghai 200433, China}

\author{Lili Wang}
\email{\href{mailto:lili\_wang@fudan.edu.cn}{lili\_wang@fudan.edu.cn}}
\address{Lili Wang: School of Mathematical Sciences, Fudan University, Shanghai 200433, China}

\begin{abstract}
We study the time analyticity of ancient solutions to heat equations on graphs. Analogous to Dong and Zhang \cite{DongZhang19}, we prove the time analyticity of ancient solutions on graphs under some sharp growth condition.
\end{abstract}
\maketitle 
Mathematics Subject Classification 2010: 05C10, 31C05.

\par
\maketitle

\bigskip

\section{Introduction}\label{sec:intro}
 
In the investigation of global solutions to elliptic equations, the well-known Liouville theorem states that any bounded (or positive) harmonic function on $\mathbb R^n$ is constant. This has been generalized to Riemmannian manifolds with nonnegative Ricci curvature by Yau \cite{Yau75}. For evolution equations, ancient solutions, i.e. solutions of the heat equation defined on the whole space and all negative time, are natural generalizations of harmonic functions. Souplet and Zhang \cite{SoupletZhang06} proved that any sublinear ancient solution is constant for a Riemannian manifold with nonnegative Ricci curvature. Later, for ancient solutions of polynomial growth, Lin and Zhang \cite{LinZhang17} proved that they are polynomial in time and gave the dimensional estimate for the space of such solutions. The dimensional bound was improved by Colding and Minicozzi in \cite{ColdingMinicozzi19} recently. Note that the time analyticity of ancient solutions has been studied by \cite{Widder62, LinZhang17}. For a Riemannian manifold with Ricci curvature bounded below, Zhang \cite{Zhang19} proved that ancient solutions with exponential growth are time analytic. This result was improved to ancient solutions with double exponential growth in \cite{DongZhang19}.

\begin{theorem}[\cite{DongZhang19}]\label{time-analytic-manif2}
Let $M$ be a complete, $n$ dimensional noncompact Riemannian manifold such that the Ricci curvature satisfies $\Ric\geq -(n-1)K_0$ for some constant $K_0\geq 0$. Let $u$ be a solution of the heat equation $\partial_t u-\Delta u=0$ on $[-2,0]\times M$ of double exponential growth, namely
\[
|u(t,x)|\leq A_1e^{A_2d^2(x,0)}, \ \ \forall (t,x)\in [-2,0]\times M,
\]
where $A_1$ and $A_2$ are positive constants. Then $u=u(t,x)$ is analytic in $t\in [-1,0]$ with radius $r>0$ depending only on $n,$ $K_0$, and $A_2$. Moreover, we have
\[
u(t,x)=\sum_{j=0}^\infty a_j(x)\frac{t^j}{j!}
\]
with $\Delta a_j(x)=a_{j+1}(x)$, and
\[
|a_j(x)|\leq A_1A_3^{j+1}j^je^{2A_2d^2(x,0)}, \ j=0,1,2,\cdots,
\]
where $A_3$ is a positive constants depending only on $n, K_0$ and $A_2$.
\end{theorem}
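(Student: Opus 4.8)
Since this statement is quoted from \cite{DongZhang19}, I only outline the proof I would give. The plan is to represent $u$, on a short backward time step, through the heat kernel $p(\cdot,\cdot,\cdot)$ of $M$, differentiate that representation in time, and control the time derivatives of $p$ by Gaussian bounds coming from the analyticity of the heat semigroup; the curvature lower bound enters only through the Li--Yau estimate and Bishop--Gromov volume comparison.

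\emph{Regularity and representation.} By interior parabolic regularity $u$ is smooth on $(-2,0]\times M$, and since $\partial_t u=\Delta u$ one has $\partial_t^j u=\Delta^j u$ for all $j$; hence it is enough to bound $\Delta_x^j u(t,x)$ for $t\in[-1,0]$, after which $a_j(x):=\partial_t^j u(0,x)=\Delta^j u(0,x)$ and $\Delta a_j=a_{j+1}$ are automatic. Fix $t\in[-1,0]$ and a small parameter $\tau\in(0,1]$, to be chosen in terms of $n,K_0,A_2$ only, and put $s:=t-\tau\in[-2,0]$. Since $M$ has $\Ric\geq-(n-1)K_0$, the function $w(t',x):=\int_M p(t'-s,x,y)\,u(s,y)\,d\mu(y)$ is well defined for $0<t'-s\leq\tau$ once $\tau$ is small relative to $A_2$ (the Gaussian decay of $p$ beats $e^{A_2 d^2(\cdot,0)}$), solves the heat equation, and has the same growth as $u$; by uniqueness of the Cauchy problem in the class of functions with growth $e^{A_2 d^2(\cdot,0)}$ over a time interval of length $\tau$, $w\equiv u$ on $[s,t]\times M$, so
\[
u(t,x)=\int_M p(\tau,x,y)\,u(s,y)\,d\mu(y),\qquad
\Delta_x^j u(t,x)=\int_M \partial_\tau^j p(\tau,x,y)\,u(s,y)\,d\mu(y),
\]
using $\Delta_x^j p=\partial_\tau^j p$ and differentiation under the integral, legitimate by the Gaussian decay of $p(\tau,x,\cdot)$ and its $x$-derivatives.

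\emph{Time derivatives of the heat kernel and the coefficient bound.} On a manifold with Ricci bounded below the heat semigroup is holomorphic: $z\mapsto p(z,x,y)$ extends to a complex sector $|\arg z|<\theta$, $\theta=\theta(n,K_0)>0$, and the Li--Yau Gaussian upper bound survives there, $|p(z,x,y)|\leq C\,\vol(B_{\sqrt{|z|}}(x))^{-1}\exp(-c\,d^2(x,y)/|z|)$ for $0<|z|\leq 2$ in the sector. Applying Cauchy's formula on the circle $|z-\tau|=\tfrac12\tau\sin\theta$ --- whose radius is a \emph{fixed} fraction of $\tau$, so the circle stays in the sector and away from the origin, which is exactly why $c_0$ below does not deteriorate with $j$ --- gives, for $0<\tau\leq1$,
\[
|\partial_\tau^j p(\tau,x,y)|\leq \frac{C_1^{\,j+1}\,j^{\,j}}{\tau^{\,j}\,\vol(B_{\sqrt\tau}(x))}\,\exp\!\Big(-\frac{c_0\,d^2(x,y)}{\tau}\Big),\qquad C_1,c_0>0\ \text{depending only on}\ n,K_0.
\]
Now choose $\tau=\tau(n,K_0,A_2)\in(0,1]$ with $c_0/\tau>2A_2$; inserting this into the formula for $\Delta_x^j u$, using $d^2(y,0)\leq 2d^2(x,y)+2d^2(x,0)$ and $|u(s,y)|\leq A_1 e^{A_2 d^2(y,0)}$,
\[
|\Delta_x^j u(t,x)|\leq A_1\,(C_1/\tau)^{\,j+1}\, j^{\,j}\, e^{2A_2 d^2(x,0)}\,\frac{1}{\vol(B_{\sqrt\tau}(x))}\int_M e^{-(c_0/\tau-2A_2)\,d^2(x,y)}\,d\mu(y).
\]
Decomposing the integral over the annuli $\{k\sqrt\tau\leq d(x,y)<(k+1)\sqrt\tau\}$ and invoking Bishop--Gromov volume comparison, the resulting series in $k$ converges to a constant $C_2=C_2(n,K_0,A_2)$; hence $|\Delta_x^j u(t,x)|\leq A_1 A_3^{\,j+1}\, j^{\,j}\, e^{2A_2 d^2(x,0)}$ on $[-1,0]$ with $A_3=A_3(n,K_0,A_2)$ chosen large enough, and the case $t=0$ is the stated bound on $a_j$.

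\emph{Analyticity and the main difficulty.} Fix $t_0\in[-1,0]$. By Taylor's formula with integral remainder, applying the previous bound on the segment between $t_0$ and $t$ (which lies in $[-1,0]$) and $j^{\,j}/j!\leq e^{\,j}$, the series $\sum_{j\geq0}\partial_t^j u(t_0,x)\,(t-t_0)^j/j!$ converges to $u(t,x)$ for $|t-t_0|<r:=1/(eA_3)$; this is analyticity in $t\in[-1,0]$ with radius $r=r(n,K_0,A_2)>0$, and at $t_0=0$ it is the displayed expansion. The main obstacle is twofold: first, identifying the precise growth class in which the Cauchy problem on $M$ is uniquely solvable, so that the heat-kernel representation is legitimate exactly under $|u|\leq A_1 e^{A_2 d^2(x,0)}$ --- this is where the hypothesis is sharp; and second, the analytic extension of the Gaussian bound to complex time with the constant $c_0$ independent of $j$, which is what forces the power $j^{\,j}$ rather than $j^{2j}$ and hence a positive radius of convergence. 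A variant that bypasses the uniqueness point works purely locally, bounding $\sup_{Q_{R/2}}|\partial_t^j u|$ by $(C/R^2)^{\,j+1}j^{\,j}\sup_{Q_R}|u|$ on parabolic cylinders via iterated mean-value inequalities for the caloric functions $\partial_t^j u=\Delta^j u$ and then summing the growth of $u$ over a ball covering; the $j^{\,j}$ estimate remains the crux either way.
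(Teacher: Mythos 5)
This statement is Theorem~\ref{time-analytic-manif2}, which the paper quotes from \cite{DongZhang19} as background and does not prove; the only thing the paper says about its proof is that the key ingredient is the parabolic mean value property, which is precisely why the authors must argue differently in the graph setting (via Proposition~\ref{key-est}). So there is no in-paper proof to match your outline against. Taken on its own terms, your primary route --- represent $u$ through the heat kernel over a short backward step $\tau=\tau(n,K_0,A_2)$, extend $p$ to a complex sector, and extract the $j^{\,j}\tau^{-j}$ bound from Cauchy's formula on a circle of radius comparable to $\tau$ --- is a genuinely different and plausible strategy from the mean-value iteration the paper attributes to Dong--Zhang (and which you correctly sketch as the ``variant'' at the end). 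The arithmetic is right: $j!\,\rho^{-j}$ with $\rho\sim\tau$ gives $C^{j+1}j^{\,j}\tau^{-j}$, the choice $c_0/\tau>2A_2$ makes the spatial integral converge via Bishop--Gromov, and $j^{\,j}/j!\leq e^{\,j}$ yields a radius $r=1/(eA_3)$ depending only on $n,K_0,A_2$. The trade-off is that your route outsources two substantial facts: (i) uniqueness of the Cauchy problem in the Tychonoff-type class $e^{A_2d^2}$ on manifolds with $\Ric\geq-(n-1)K_0$ over a time step of length $\tau\lesssim 1/A_2$, needed to identify $u$ with the heat-kernel representation; and (ii) the complex-time Gaussian upper bound in a sector with on-diagonal factor $\vol(B_{\sqrt{|z|}}(x))^{-1}$ and a $j$-independent Gaussian constant. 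Both are available in the literature (Karp--Li/Grigor'yan for (i), Davies/Coulhon--Sikora for (ii)) but are far from elementary, whereas the mean-value iteration is local, needs no uniqueness class, and is closer in spirit to the discrete iteration $|\Delta f(x)|\leq 2\Deg(x)\sup_{B_1(x)}|f|$ that the present paper actually uses. As an outline your proposal is acceptable; a complete write-up would have to supply precise statements and constants for (i) and (ii).
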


For graphs, (discrete) harmonic functions of polynomial growth has been extensively studied by many authors, see e.g. \cite{Delmotte99, Kleiner10, HuaJost13, HuaJostLi11, HJL15,Hua19}. In this paper, we study the time analyticity of ancient solutions with exponential growth on graphs.

We recall the setting of weighted graphs. Let $G=(V,E)$ be a locally finite, simple, undirected graph with the set of vertices $V$ and the set of edges $E$.  Two vertices $x,y$ are called neighbours, denoted by $x\sim y$, if there is an edge connecting $x$ and $y$, i.e. $\{x,y\}\in E$. Let
\[\omega: E\rightarrow \mathbb (0,\infty),\ \{x,y\}\mapsto \omega_{xy}=\omega_{yx}\] be the edge weight function. We extend $\omega$ to $V\times V$ by setting $\omega_{xy}=0$ for any pair $(x,y)$ with $x\not\sim y.$ Let $$\mu:V\to (0,\infty), x\mapsto \mu_x$$ be the vertex weight function. We denote by $l^2(V,\mu)$ the space of $l^2$-summable functions on
$V$ with respect to the discrete measure $\mu$. We call the quadruple $G=(V, E, \mu,\omega)$ a \emph{weighted graph}.   

For a weighted graph $G=(V, E, \mu,\omega)$ and any function $f: V\rightarrow \mathbb R$, the Laplacian of $G$ is defined as
\[
\Delta f(x)=\sum_{y\in V}\frac{\omega_{xy}}{\mu_x}\left(f(y)-f(x)\right).
\]
For any vertex $x,$ we denote by
\[
\Deg(x):=\sum_{y\in V}\frac{\omega_{xy}}{\mu_x}
\]
the (weighted) degree of vertex $x$.
Note that the Laplacian $\Delta$ depends on the choice of weights $\omega$ and $\mu$. One can show that Laplacian is a bounded operator on $l^2(V,\mu)$ if and only if
\[
\sup_{x\in V}\Deg(x)<+\infty,
\]
see e.g. \cite{KellerLenz12}. The Laplacian is \emph{normalized} if $\Deg(x)\equiv 1$ for all $x\in V.$
We denote by $d$ the \emph{combinatorial distance}, that is, $d(x,y):=\inf\{n|x=z_0\sim \cdots \sim z_n=y\}$ and by
\[
B_R(x):=\{y\in V| d(y,x)\leq R\}
\]
the ball of radius $R>0$ centered at $x$. For any subset $K$ in $V,$ we denote by $(K)_1:=\{y\in V: \exists x\in K,\ \mathrm{s.t.}\ d(y,x)\leq 1\}$ the $1$-neighborhood of $K.$

A space-time function $u(x,t)$ is called an ancient solution on $V\times [-T,0],$ $T\in[0,\infty],$ if
\[
\frac{\partial}{\partial t}u(x,t)=\Delta u(x,t), \ \ \forall (x,t)\in V\times [-T,0].
\]
For ancient solutions of the heat equation on graphs, we prove the following result.
\begin{theorem}\label{main-result}
Let $G=(V, E, \mu,\omega)$ be a weighted graph and $p\in V.$ Let $u$ be an ancient solution of the heat equation $\partial_t u-\Delta u=0$ on $V\times[-T,0]$ satisfying
\begin{align}\begin{split}\label{exp-growth-cond}
|u(x,t)|\leq A_1e^{A_2d(x,p)\ln d(x,p)}, \ \ \forall (x,t)\in V\times [-T,0],
\end{split}\end{align}
where $A_1, A_2$ are positive constants.
Assume that for some $A_3\geq0,$
\begin{align}\begin{split}\label{deg-cond}
\Deg(x)\leq C d(x,p)^{A_3}, \ \ \forall x\in V.
\end{split}\end{align}
Suppose that $A_2+A_3\leq 1,$ then $u=u(x,t)$ is analytic in $t\in [-T,0]$ with analytic radius $r>0$ satisfying $r=+\infty$(resp. $r\geq\frac{1}{2eC}$) if $A_2+A_3<1$ (resp. $A_2+A_3=1$).

Moreover, for any $(x,t)\in V\times (-r,0]$, we have
\[
u(x,t)=\sum_{k=0}^\infty a_k(x)\frac{t^k}{k!}
\]
with $\Delta a_k(x)=a_{k+1}(x)$, and
\[
|a_k(x)|\leq A_1(2C)^ke^{(A_2+A_3)\left(k+d(x,p)\right)\ln \left(k+d(x,p)\right)}, \ k=0,1,2,\cdots.
\]
\end{theorem}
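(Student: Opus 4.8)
The plan is to set $a_k(x) := \partial_t^k u(x,t)\big|_{t=0}$ and show, by differentiating the heat equation in time, that $a_{k+1} = \Delta a_k$, so that $a_k = \Delta^k a_0$. The analyticity will then follow once we establish the bound on $|a_k(x)|$, since that bound gives a positive lower estimate for the radius of convergence of $\sum_k a_k(x) t^k/k!$, and one checks directly that this series solves the heat equation with the same initial data, hence agrees with $u$ by uniqueness of the backward Cauchy problem on the relevant time interval.

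The heart of the matter is the quantitative bound on $a_k = \Delta^k u(\cdot, 0)$. I would obtain this from an integral (Cauchy-type) representation in time: fix $x$ and a radius $\rho$, and use the mean value / Cauchy estimate philosophy adapted to the discrete setting. Concretely, since $u$ is smooth in $t$ on $[-T,0]$, write $a_k(x) = \partial_t^k u(x,t)|_{t=0}$ and estimate $\partial_t^k u$ via iterated application of $\partial_t = \Delta$. The key combinatorial point is that $\Delta^k$ applied at $x$ only involves values of $u$ on the ball $B_k(x)$, and each application of $\Delta$ costs a factor controlled by $\Deg$; using \eqref{deg-cond}, applying $\Delta$ at a vertex at distance $j$ from $p$ costs a factor $\lesssim C(j)^{A_3}$ where $j \le k + d(x,p)$. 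Combined with the growth bound \eqref{exp-growth-cond}, which on $B_k(x)$ gives $|u| \le A_1 e^{A_2(k+d(x,p))\ln(k+d(x,p))}$, iterating $k$ times produces at most $(2C)^k$ (absorbing the number-of-terms factors, which are themselves controlled by the degree bound) times $e^{(A_2+A_3)(k+d(x,p))\ln(k+d(x,p))}$. This is exactly the claimed estimate; tracking the constants carefully so that the base of the exponential is $2C$ and not something worse is the delicate accounting step.

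The main obstacle, as I see it, is bookkeeping the interaction between the spatial growth of $u$ and the growth of $\Deg$ simultaneously: each of the $k$ applications of $\Delta$ both widens the relevant ball (pushing the $d(x,p)$-dependent exponent up to $k + d(x,p)$) and multiplies by a degree factor that itself grows like a power of the distance to $p$. The condition $A_2 + A_3 \le 1$ is precisely what makes the product of these $k$ effects stay of the form $e^{(A_2+A_3)(k+d(x,p))\ln(k+d(x,p))}$ rather than blowing up faster; one must be careful that the super-exponential-in-$k$ growth coming from iterating $\Delta$ on a ball of radius $k$ is exactly $e^{(A_2+A_3)k\ln k}$-type and no worse, which is where the sharpness of the hypothesis enters. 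I would isolate this as a lemma: a bound of the form $|\Delta^k u(x,0)| \le A_1 (2C)^k e^{(A_2+A_3)(k+d(x,p))\ln(k+d(x,p))}$, proved by induction on $k$ using the growth hypothesis on a slightly larger ball at each step.

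Once the coefficient bound is in hand, the radius of convergence statement is routine: $\sum_k |a_k(x)| |t|^k/k! \le A_1 \sum_k (2C|t|)^k \frac{e^{(A_2+A_3)(k+d(x,p))\ln(k+d(x,p))}}{k!}$, and using $k! \gtrsim (k/e)^k$ together with $A_2+A_3 \le 1$ shows the terms behave like $(2eC|t| \cdot k^{A_2+A_3-1} \cdot (\text{subexponential}))^k$; this converges for all $t$ when $A_2+A_3 < 1$ and for $|t| < \frac{1}{2eC}$ when $A_2+A_3 = 1$. Finally I would verify that the resulting power series, call it $\tilde u(x,t)$, is a genuine solution of $\partial_t \tilde u = \Delta \tilde u$ on $V \times (-r,0]$ (term-by-term differentiation is justified inside the radius of convergence, and $\Delta$ commutes with the sum because the bound is locally uniform in $x$), that $\tilde u(\cdot, 0) = u(\cdot,0)$ and all time-derivatives match at $t=0$, and hence $\tilde u \equiv u$ on the overlap by the uniqueness theorem for the heat equation on graphs with the given growth — which completes the proof that $u$ itself is time-analytic with the stated radius and coefficients.
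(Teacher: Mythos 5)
Your coefficient estimate is essentially the paper's: the authors also iterate the elementary bound $|\Delta f(x)|\le 2\Deg(x)\sup_{y\in B_1(x)}|f(y)|$ to get $|\partial_t^{k}u(x,s)|\le 2^{k}\bigl(\sup_{B_{k-1}(x)}\Deg\bigr)^{k}\sup_{B_k(x)}|u(\cdot,s)|$, then feed in \eqref{exp-growth-cond} and \eqref{deg-cond} with $d(y,p)\le k+d(x,p)$, exactly the bookkeeping you describe. The convergence analysis via Stirling is also the same. So the quantitative heart of your argument is sound.

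The genuine gap is in the last step, where you identify the power series $\tilde u(x,t)=\sum_k a_k(x)t^k/k!$ with $u$. You invoke ``uniqueness of the backward Cauchy problem'' / ``the uniqueness theorem for the heat equation on graphs with the given growth,'' but going from $t=0$ into $t<0$ is a \emph{backward} uniqueness problem, and no such theorem is available off the shelf here (the Laplacian is unbounded when $A_3>0$, and backward uniqueness for the heat equation is a delicate matter even in $\mathbb R^n$). Your fallback justification --- that $\tilde u$ and $u$ have matching time-derivatives of all orders at $t=0$ --- is insufficient on its own: the paper's own sharpness example in Section~\ref{sec:sharp} is a nonzero ancient solution all of whose time-derivatives vanish at a single time, so agreement of all $\partial_t^k$ at one instant does not force two solutions to coincide unless you already know $u$ is time-analytic, which is precisely what is being proved. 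The paper avoids this circularity entirely: it applies Taylor's theorem with the Lagrange remainder to $u$ itself, so the quantity to control is $\frac{\partial_t^k u(x,s)}{k!}(t-t_0)^k$ at an intermediate time $s$, and the same iterated-Laplacian estimate (which holds uniformly in $s\in[-T,0]$ because \eqref{exp-growth-cond} does) shows the remainder tends to zero for $|t-t_0|<r$. No uniqueness statement is needed. Your proof becomes correct if you replace the ``construct the series and match it to $u$'' step by this remainder argument; as written, it rests on an unproven and substantively hard ingredient.
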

\begin{remark}
\begin{enumerate}
\item For Riemannian manifolds, the growth rate of the ancient solution is double exponential, i.e. $e^{A_2 d^2},$ see Theorem~\ref{time-analytic-manif2}, while in the above theorem we assume the growth rate is $e^{A_2d\log d}.$ Moreover, the factor $A_2$ in the growth rate is crucial for the time analyticity, for e.g. graphs with bounded degree ($A_3=0$), which is irrelevent for the continuous case.
\item Our estimate is sharp for graphs, see Section~\ref{sec:sharp}.
\end{enumerate}
\end{remark}
For Riemannian manifolds, the key estimate for the proof of Theorem~\ref{time-analytic-manif2} is the mean value property of the heat equation, which is unknown for graphs at the moment. To circumvent the problem, we take the advantages of the discrete nature of the Laplacian, e.g.
for a graph with bounded degree, $\Delta$ is a bounded operator on $\ell^\infty$ functions, and the support of the function $\Delta f$ is contained in the $1$-neighborhood of the support of $f,$ see Proposition~\ref{key-est}.

\begin{corollary}\label{time-analytic}
Let $G=(V, E, \mu,\omega)$ be a weighted graph and $p\in V$, and $u$ be a solution of the heat equation $\partial_t u-\Delta u=0$ on $V\times [-T,0]$ satisfying
\begin{align}\begin{split}
|u(x,t)|\leq A_1e^{A_2d(x,p)\ln d(x,p)}, \ \ \forall (x,t)\in V\times [-T,0],
\end{split}\end{align}
where $A_1$, $A_2$ are positive constants. Assume that for some constant $D>0$,
\[
\sup_{x\in V}\Deg(x)\leq D.
\]
Then $u=u(x,t)$ is time analytic with radius $r>0$ satisfying $r=+\infty$(resp. $r\geq \frac{1}{2eD}$) if $A_2<1$ (resp. $A_2=1$).
Moreover, for any $(x,t)\in V\times (-r,0]$, we have
\[
u(x,t)=\sum_{k=0}^\infty a_k(x)\frac{t^k}{k!}
\]
with $\Delta a_k(x)=a_{k+1}(x)$, and
\[
|a_k(x)|\leq A_1(2D)^k e^{A_2\left(k+d(x,p)\right)\ln\left(k+d(x,p)\right)}, \ k=0,1,2,\cdots.
\]
%where $A_4$ is a positive constants depending only on $A_1$.
\end{corollary}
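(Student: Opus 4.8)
The plan is to derive Corollary~\ref{time-analytic} directly from Theorem~\ref{main-result} by observing that a uniform degree bound is a special instance of the polynomial degree bound \eqref{deg-cond}. Concretely, if $\sup_{x\in V}\Deg(x)\leq D$, then for every $x\in V$ we trivially have $\Deg(x)\leq D\cdot d(x,p)^{0}$ once $d(x,p)\geq 1$, and for $x=p$ the value $\Deg(p)\leq D$ is absorbed by choosing the constant in \eqref{deg-cond} to be $D$ (with the convention that $d(p,p)^0=1$, or by noting the finitely many exceptional vertices cause no trouble). Thus \eqref{deg-cond} holds with $A_3=0$ and $C=D$.

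First I would record that the growth hypothesis in the corollary is literally \eqref{exp-growth-cond} with the same $A_1,A_2$. Then, since $A_3=0$, the condition $A_2+A_3\leq 1$ of Theorem~\ref{main-result} becomes exactly $A_2\leq 1$, and the dichotomy $A_2+A_3<1$ versus $A_2+A_3=1$ becomes $A_2<1$ versus $A_2=1$. Applying Theorem~\ref{main-result} verbatim with $C=D$ and $A_3=0$, we obtain that $u$ is time analytic with radius $r=+\infty$ when $A_2<1$ and $r\geq \frac{1}{2eC}=\frac{1}{2eD}$ when $A_2=1$, which is the asserted radius bound.

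Finally I would transcribe the series expansion and coefficient estimate: Theorem~\ref{main-result} gives $u(x,t)=\sum_{k=0}^\infty a_k(x)\frac{t^k}{k!}$ on $V\times(-r,0]$ with $\Delta a_k=a_{k+1}$ and
\[
|a_k(x)|\leq A_1(2C)^k e^{(A_2+A_3)(k+d(x,p))\ln(k+d(x,p))},
\]
and substituting $C=D$, $A_3=0$ yields precisely
\[
|a_k(x)|\leq A_1(2D)^k e^{A_2(k+d(x,p))\ln(k+d(x,p))}, \qquad k=0,1,2,\dots,
\]
as claimed. There is essentially no obstacle here; the only minor point requiring care is the treatment of the base vertex $p$ (and any vertex at distance $0$ or $1$ from $p$) in verifying \eqref{deg-cond}, since $d(x,p)^{A_3}$ with $A_3=0$ must be read as the constant $1$, so that the bound $\Deg(x)\leq C\cdot 1$ with $C=D$ still captures $\Deg(p)\leq D$; this is immediate and could even be folded into enlarging $C$ if one prefers to keep \eqref{deg-cond} only for $d(x,p)\geq 1$.
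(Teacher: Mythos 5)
Your proposal is correct and matches the paper's (implicit) argument: the corollary is stated as an immediate consequence of Theorem~\ref{main-result} with $A_3=0$ and $C=D$, under which $A_2+A_3\leq 1$ reduces to $A_2\leq 1$ and all the conclusions specialize exactly as you transcribe them. The minor point about reading $d(x,p)^{0}$ as $1$ at the base vertex is handled appropriately and causes no issue.
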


\begin{corollary}\label{backward-heat-equation}
Let $G=(V, E, \mu,\omega)$ be a weighted graph and $p\in V$. Assume that for some constant $D>0$,
\[
\sup_{x\in V}\Deg(x)\leq D.
\]
Then the Cauchy problem for the backward heat equation
\begin{align}\begin{split}\label{backward-heat-equ}
\begin{cases}
\partial_t u+\Delta u=0,\\
u(x,0)=a(x)
\end{cases}
\end{split}\end{align}
has a solution on $[0,\delta)\times V$ for some constants $\delta>0$, $A_1>0$ and $0<A_2<1$ satisfying
\begin{equation}\label{linear-exponent}
|u(x,t)|\leq A_1e^{A_2d(x,p)\ln d(x,p)}
\end{equation}
if and only if
\begin{equation}\label{upp-bdd}
|\Delta^k a(x)|\leq A_3 \left(2D\right)^k e^{A_4\left(k+d(x,p)\right)\ln\left(k+ d(x,p)\right)}, k=0,1,2,\cdots.
\end{equation}
holds for some constant $A_3>0$ and $0<A_4<1$.
\end{corollary}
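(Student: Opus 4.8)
The statement is an equivalence, and I would prove the two implications separately. The implication ``solution $\Rightarrow$ coefficient bound'' is a short reduction to Corollary~\ref{time-analytic} via time reversal; the converse ``coefficient bound $\Rightarrow$ solution'' requires writing down the solution explicitly and then pushing through a summation estimate, which is where the real work lies.

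\emph{From a solution to the coefficient bound.} Suppose $u$ solves $\partial_t u+\Delta u=0$ on $[0,\delta)\times V$ with $u(x,0)=a(x)$ and $|u(x,t)|\le A_1e^{A_2d(x,p)\ln d(x,p)}$, where $0<A_2<1$. Set $v(x,t):=u(x,-t)$ for $t\in(-\delta,0]$; a direct computation gives $\partial_t v=\Delta v$, so $v$ is a solution of the forward heat equation on $V\times[-\delta/2,0]$, and it inherits the bound $|v(x,t)|\le A_1e^{A_2d(x,p)\ln d(x,p)}$ there. Apply Corollary~\ref{time-analytic} to $v$ with $T=\delta/2$ and degree bound $D$: since $A_2<1$ the analytic radius is $+\infty$, and $v(x,t)=\sum_k a_k(x)\frac{t^k}{k!}$ with $\Delta a_k=a_{k+1}$ and $|a_k(x)|\le A_1(2D)^ke^{A_2(k+d(x,p))\ln(k+d(x,p))}$. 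Because $\partial_t v=\Delta v$ forces $\partial_t^k v=\Delta^k v$ at $t=0$ (for fixed $x$, each $\Delta^k v(x,\cdot)$ is a finite linear combination of the functions $v(y,\cdot)$ with $y$ near $x$, hence smooth in $t$), one has $a_k(x)=\Delta^k v(x,0)=\Delta^k a(x)$. This is exactly \eqref{upp-bdd} with $A_3=A_1$ and $A_4=A_2\in(0,1)$.

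\emph{From the coefficient bound to a solution.} Conversely, assume \eqref{upp-bdd} holds for some $A_3>0$, $0<A_4<1$. Define
\[
u(x,t):=\sum_{k=0}^\infty (-1)^k\,\Delta^k a(x)\,\frac{t^k}{k!}.
\]
For fixed $x$, writing $d=d(x,p)$, the bound \eqref{upp-bdd} gives $|\Delta^k a(x)|^{1/k}\lesssim (2D)\,(k+d)^{A_4(k+d)/k}$, which grows only polynomially in $k$ (like $k^{A_4}$), while $(k!)^{1/k}\sim k/e$; hence the series has infinite radius of convergence in $t$, so $u(x,\cdot)$ is entire and may be differentiated term by term, giving $\partial_t u(x,t)=-\sum_j(-1)^j\Delta^{j+1}a(x)\frac{t^j}{j!}$. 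Since $\Delta u(x,t)=\sum_{y\sim x}\frac{\omega_{xy}}{\mu_x}\bigl(u(y,t)-u(x,t)\bigr)$ is a \emph{finite} sum, $\Delta$ commutes with $\sum_k$, so $\Delta u(x,t)=\sum_j(-1)^j\Delta^{j+1}a(x)\frac{t^j}{j!}=-\partial_t u(x,t)$, and $u(x,0)=a(x)$. Thus $u$ solves the Cauchy problem \eqref{backward-heat-equ} on $[0,\infty)\times V$, and it remains to verify \eqref{linear-exponent} on $[0,\delta)\times V$.

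\emph{The growth estimate (the main obstacle).} From \eqref{upp-bdd},
\[
|u(x,t)|\le A_3\sum_{k=0}^\infty\frac{(2D)^k t^k}{k!}\,(k+d)^{A_4(k+d)} .
\]
For $k,d\ge1$ I would use the elementary inequalities $k!\ge (k/e)^k$ and $(k+d)^{k+d}\le (2e)^{k+d}k^k d^d$ (the latter from $\binom{k+d}{k}\le 2^{k+d}$ and $n^n\le e^n n!$) to factor off the spatial part:
\[
\frac{(2D)^k t^k}{k!}\,(k+d)^{A_4(k+d)}\ \le\ (2ed)^{A_4 d}\,\Bigl(\tfrac{C\,t}{k^{\,1-A_4}}\Bigr)^{k},
\]
with $C=C(D)$ a fixed constant. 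Since $A_4<1$, the series $\sum_k\bigl(Ct/k^{1-A_4}\bigr)^k$ converges for every $t\ge0$ to a finite nondecreasing function $F(t)$, so for any chosen $\delta>0$ (indeed one may take $\delta=+\infty$) we get $|u(x,t)|\le A_3 F(\delta)\,(2ed)^{A_4 d}$ on $[0,\delta)\times V$. Finally, picking $A_2$ with $A_4<A_2<1$, the surplus $(A_2-A_4)\,d\ln d$ dominates the lower-order term $A_4 d\ln(2e)$ for all large $d$, so $(2ed)^{A_4 d}\le C'e^{A_2 d\ln d}$; the finitely many small values of $d$, together with the boundary cases $d=0$ and $k=0$ (handled by the same computation with the spatial factor absent), are absorbed into the constant. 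This yields \eqref{linear-exponent} with a suitable $A_1$ and $0<A_2<1$.

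I expect the bookkeeping in this last estimate to be the only genuine difficulty: one must disentangle the three interacting growth factors — $k^{A_4 k}$ from the iterated Laplacian, $d^{A_4 d}$ from the spatial growth, and the cross term from $(k+d)\ln(k+d)$ — against the gain $1/k!$, while ending with a \emph{strict} exponent $A_2<1$. The rest is the routine time-reversal reduction to Corollary~\ref{time-analytic} and a standard term-by-term differentiation argument.
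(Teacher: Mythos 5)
Your proof is correct and follows essentially the same route as the paper: time reversal plus Corollary~\ref{time-analytic} for the ``solution $\Rightarrow$ bound'' direction, and for the converse the explicit series $\sum_k(-1)^k\Delta^k a(x)t^k/k!$ with term-by-term differentiation (legitimate since $\Delta$ is a finite sum) followed by a summation estimate. The only difference is cosmetic bookkeeping in that last estimate --- you separate the $k$ and $d$ dependence via $(k+d)^{k+d}\le(2e)^{k+d}k^kd^d$, whereas the paper uses convexity of $x\ln x$ and a case split on $d$, both landing on a strict exponent $A_2<1$; just note that your parenthetical suggestion to take $\delta=+\infty$ would make $F(\delta)$ infinite, so the uniform constant requires finite $\delta$, which is all the statement asks for.
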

The paper is organized as follows: In next section, we prove Theorem \ref{main-result}. The last section is devoted to the example for the sharpness of the result.

\section{Proof of Theorem \ref{main-result}}\label{proof}

The following proposition is elementary, and hence we omit the proof here.
\begin{prop}\label{key-est}
Let $G=(V, E, \mu,\omega)$ be a weighted graph and $K\subset V$ be a subset. Then
\begin{enumerate}
\item $|\Delta f(x)|\leq 2\Deg(x) \sup\limits_{y\in B_1(x)}|f(y)|;$
\item $\sup\limits_{x\in K}|\Delta f(x)|\leq 2\sup\limits_{y\in K}\Deg(y)\sup\limits_{y\in (K)_1}|f(x)|.$
\end{enumerate}
\end{prop}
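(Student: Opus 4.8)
The plan is to derive both inequalities directly from the definition of the Laplacian by a single application of the triangle inequality, exploiting the fact that the weight $\omega_{xy}$ vanishes unless $x\sim y$, so that only the neighbours of $x$ contribute to $\Delta f(x)$. This makes $B_1(x)$ the natural set controlling $\Delta f(x)$ and reduces everything to elementary bookkeeping.

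For part (1), I would start from
\[
\Delta f(x)=\sum_{y\in V}\frac{\omega_{xy}}{\mu_x}\bigl(f(y)-f(x)\bigr)
\]
and apply the triangle inequality term by term, using $|f(y)-f(x)|\leq |f(y)|+|f(x)|$. The crucial observation is that every nonzero summand has $\omega_{xy}\neq 0$, hence $x\sim y$ and thus $y\in B_1(x)$; since also $x\in B_1(x)$, both $|f(y)|$ and $|f(x)|$ are bounded by $\sup_{z\in B_1(x)}|f(z)|$. Factoring this constant out of the sum leaves $\sum_{y\in V}\frac{\omega_{xy}}{\mu_x}=\Deg(x)$ multiplied by $2\sup_{z\in B_1(x)}|f(z)|$, which is exactly the claimed bound.

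Part (2) then follows formally from part (1) by taking the supremum over $x\in K$, together with two monotonicity remarks: for $x\in K$ one has $\Deg(x)\leq \sup_{y\in K}\Deg(y)$, and since $x\in K$ the ball $B_1(x)$ is contained in the $1$-neighbourhood $(K)_1$, whence $\sup_{z\in B_1(x)}|f(z)|\leq \sup_{z\in (K)_1}|f(z)|$. Combining these with (1) yields, for each $x\in K$, a bound already independent of $x$, so passing to the supremum on the left-hand side is immediate. There is essentially no analytic obstacle here; the only points requiring care are the observation that the defining sum ranges only over neighbours of $x$ (so that $B_1(x)$ is the relevant set) and the inclusion $B_1(x)\subseteq (K)_1$ for $x\in K$, which explains why the statement is recorded as elementary.
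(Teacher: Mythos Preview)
Your argument is correct and is precisely the elementary computation the paper has in mind; the paper in fact omits the proof, stating only that the proposition is elementary. There is nothing to add.
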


Now we prove the main result.
\begin{proof}[Proof of Theorem \ref{main-result}]
Given $R\geq 1$. Fix a vertex $x\in B_R(p)$. For $t_0\in [-T,0]$, by Talyor's theorem,
\be\label{talyor-extension}
u(x,t)-\sum_{j=1}^{k-1}\frac{\partial^j_tu(x,t_0)}{j!}(t-t_0)^j=\frac{\partial_t^ku(x,s)}{k!}(t-t_0)^k
\qe
holds for for any $t\in [-T,0]$ and $s\in (t_0, t)$ or $(t,t_0)$.
It suffices to prove that the right hand side of (\ref{talyor-extension}) tends to zero when $k$ tends to infinite for any $t\in [-T,0]$ satisfying $|t-t_0|<r$.

Since $\partial_t u=\Delta u$, and $\partial_t-\Delta$ is commutable with $\partial_t^j$, we obtain
\be\label{higorder-timeder-sol}
\partial_t(\partial_t^j u)=\Delta (\partial_t^j u).
\qe
For (\ref{higorder-timeder-sol}), by applying Proposition \ref{key-est},
\begin{align}\begin{split}\label{time-der-contr}
|\partial_t^{j+1} u(x,s)|
&\leq 2\Deg(x)\sup\limits_{y\in B_1(x)}|\partial_t^ju(y,s)|\\
&\leq 2^2\Deg(x)\sup_{y\in B_1(x)}\Deg(y) \sup\limits_{y\in B_2(x)}|\partial_t^{j-1}u(y,s)|\\
&\leq 2^{j+1}\left(\sup_{y\in B_j(x)}\Deg(y)\right)^{j+1}\sup\limits_{y\in B_{j+1}(x)}|u(y,s)|.
\end{split}\end{align}
%Without loss of generality, we assume $t_0<t$.
Let $\delta>0$ be a constant. Consider $t\in [t_0-\delta,t_0+\delta]\cap [-T,0]$.
%Without loss of generality, we assume that $s\in (t_0,t)$.
Let $\zeta=1- A_2-A_3$. Then $\zeta\geq 0$ since $A_2+A_3\leq 1$.
Hence, by (\ref{exp-growth-cond}), (\ref{deg-cond}) and (\ref{time-der-contr}),
\begin{align}\begin{split}\label{rem-term-est}
\Big|\frac{\partial_t^ku(x,s)}{k!}(t-t_0)^k\Big|
&\leq \frac{\delta^k}{k!}\sup\limits_{s\in (t_0,t)}|\partial_t^ku(x,s)|\\
&\leq \frac{\delta^k}{k!}2^k\Big(\sup_{y\in B_{k-1}(x)}\Deg(y)\Big)^k\sup\limits_{s\in (t_0,t),y\in B_k(x)}|u(y,s)|\\
&\leq \frac{(2\delta)^k}{k!}C^ke^{A_3k\ln (k+R)}\cdot A_1 e^{A_2(k+R)\ln(k+R)}\\
&\leq\frac{(2\delta C)^k}{k!}A_1 e^{(1-\zeta)(k+R)\ln (k+R)},
\end{split}\end{align}
where we used $d(y,p)\leq d(y,x)+d(x,p)\leq k+R.$

Stirling's approximation implies that there exists a number $k_0>0$ such that for any $k\geq k_0$,
\be\label{k!-est}
k!\geq \frac{k^k\sqrt{k}}{e^k}=\frac{e^{k\ln k+\frac{1}{2}\ln k}}{e^k}.
\qe
By Lagrange's mean value theorem, there exists $\xi\in (k,k+R)$ such that
%\[f'(x)=\ln x+1\leq \ln(k+R)+1\] holds for $x\leq k+R$.
\be\label{(k+R)ln(k+R)-est}
(k+R)\ln (k+R)-k\ln k=(\ln \xi+1)R\leq(\ln2+1+\ln k)R
\qe
holds for any $k\geq R$. Let
\[
Q=\frac{(2\delta C)^k}{k!}A_1 e^{(1-\zeta)(k+R)\ln (k+R)}.
\]

For $\zeta=0$, suppose that $\delta<\frac{1}{2eC}$. By (\ref{k!-est}) and (\ref{(k+R)ln(k+R)-est}), we have for $k\geq \max\{k_0,R\}$,
\begin{align}\begin{split}\label{rem-term-est1}
Q&\leq  A_1(2\delta Ce)^k e^{(\ln 2+1)R+\left(R-\frac{1}{2}\right)\ln k}\\
&=  A_1 e^{(\ln 2+1)R+\left(R-\frac{1}{2}\right)\ln k+k\ln(2\delta Ce)}\rightarrow 0,\ k\rightarrow \infty.
\end{split}\end{align}
This yields the result.

For $\zeta>0$, by (\ref{k!-est}) and (\ref{(k+R)ln(k+R)-est}), we have
\begin{align}\begin{split}\label{rem-term-est2 }
Q&\leq \frac{(2\delta C)^k}{k!} A_1e^{(1-\zeta)\left[k\ln k+(\ln 2+1)R+ R\ln k\right]}\\
 &\leq  A_1e^{-\zeta k\ln k-\frac{1}{2}\ln k+ k\ln(2\delta Ce)+(1-\zeta)\left[(\ln 2+1)R+R\ln k\right]}\\
 &=A_1e^{-\zeta k\ln k+\left[(1-\zeta)(\ln 2+1)R-\frac{1}{2}\ln k\right]+ k\ln(2\delta Ce)+(1-\zeta)R\ln k}\\
 &\leq A_1e^{-\frac{\zeta}{3}k\ln k}\rightarrow 0, k\rightarrow \infty,
\end{split}\end{align}
holds for
\[
k\geq \max\left\{k_0, R, e^{2(1-\zeta)(\ln 2+1)R}, (2\delta C e)^\frac{3}{\zeta}, \frac{3(1-\zeta)R}{\zeta}\right\}.
\]
This implies the result.

Combining (\ref{rem-term-est1}), (\ref{rem-term-est2 }) with (\ref{rem-term-est}), we obtain that the remaining term tends to zero when $k$ tends to infinite. Hence,
%for $(x,t)\in B_R(0)\times [t_0-\delta,t_0]$ with $t_0\in [-1,0]$, we have
\[
u(x,t)=\sum_{k=0}^{\infty}\partial^k_tu(x,t_0)\frac{(t-t_0)^k}{k!},
\]
that is, $u$ is time analytic with radius $r=+\infty$ (resp. $r\geq\frac{1}{2eC}$) if $A_2+A_3<1$ (resp. $A_2+A_3=1$).

For $t_0=0$, we write $a_k(x)=\partial^k_tu(x,0)$. Then
\[
\partial_tu(x,t)=\sum_{k=0}^{\infty}a_{k+1}(x)\frac{t^k}{k!}, \ \ \Delta u(x,t)=\sum_{k=0}^{\infty}\Delta a_{k}(x)\frac{t^k}{k!},
\]
the two series converge uniformly for $(x,t)\in B_R(p)\times (-\delta, 0]$.
Since $u(x,t)$ is an ancient solution of heat equation,
\[
\Delta a_k(x)=a_{k+1}(x).
\]
Moreover, as in the proof of (\ref{rem-term-est}), we get \[|a_k(x)|\leq A_1(2C)^ke^{(A_2+A_3)\left(k+d(x,p)\right)\ln \left(k+d(x,p)\right)}.\]
This completes the proof of the theorem.
\end{proof}

\begin{proof}[Proof of Corollary \ref{backward-heat-equation}]
On one hand, let $u(x,t)$ be a solution of (\ref{backward-heat-equ}) satisfying (\ref{linear-exponent}). Then $u(x,-t)$ is an ancient solution of the heat equation satisfying (\ref{linear-exponent}). By Corollary \ref{time-analytic}, $u(x,-t)$ is a time analytic solution and
\begin{align*}
u(x,-t)&=\sum_{k=0}^\infty\partial_{(-t)}^ku(x,0)\frac{(-t)^k}{k!}\\
       &=\sum_{k=0}^\infty\Delta^k a(x) \frac{(-t)^k}{k!}.
\end{align*}
Since $a_k(x)=\Delta^k a(x)$, then (\ref{upp-bdd}) follows.

On the other hand, suppose that (\ref{upp-bdd}) holds. Let
\[
q_k(x)=\Big|\Delta^k a(x)\frac{t^k}{k!}\Big|
\]
and $\zeta=1-A_2$. Since $A_2\leq 1$, $\zeta\geq 0$. Hence, for $t\in (-\delta,0]$ and $x\in B_R(p)$, by (\ref{k!-est}) and (\ref{(k+R)ln(k+R)-est}), we have
\begin{align*}
q_k(x)
&\leq  A_1 (2D)^ke^{A_2\left(k+d(x,p)\right)\ln\left(k+ d(x,p)\right)}\frac{\delta^k}{k!}\\
&\leq  A_1 \frac{(2D\delta )^k}{k!}e^{A_2\left(k+R\right)\ln\left(k+R\right)}\\
&\leq A_1 e^{-(1-A_2)k\ln k-\frac{1}{2}\ln k+k\ln(2\delta De)+A_2(\ln2+1)R+ A_2R\ln k }\\
&=A_1 e^{-\zeta k\ln k+\left[(1-\zeta)(\ln 2+1)R-\frac{1}{2}\ln k\right]+ k\ln(2\delta Ce)+(1-\zeta)R\ln k}.
\end{align*}
For $\zeta=0$ and $\delta<\frac{1}{2De}$,
\begin{align}\begin{split}\label{comp1}
q_k(x)
&\leq A_1 e^{\left[(\ln2+1)R-\frac{1}{2}\ln k\right]+k\ln(2\delta De)+R\ln k}\\
&\leq A_1 e^{k\ln(2\delta De)+R\sqrt{k}}\\
&\leq A_1 e^{\frac{1}{2}k\ln(2\delta De)}
\end{split}\end{align}
holds for
\[k\geq \max\{k_0, R, 2, \left(\frac{2R}{-\ln (2\delta De)}\right)^2\}.\]

For $\zeta>0$ and $\delta<+\infty$,  we obtain
\begin{align}\begin{split}\label{comp2}
q_k(x)
 &\leq A_1e^{-\zeta k\ln k+\left[(1-\zeta)(\ln 2+1)R-\frac{1}{2}\ln k\right]+ k\ln(2\delta Ce)}+(1-\zeta)R\ln k\\
 &\leq A_1e^{-\frac{\zeta}{3}k\ln k}
\end{split}\end{align}
holds for
\[
k\geq \max\left\{k_0, R, e^{2(1-\zeta)(\ln 2+1)R}, (2\delta De)^\frac{3}{\zeta}, \frac{3(1-\zeta)R}{\zeta}\right\}.
\]
Hence, by (\ref{comp1}) and (\ref{comp2}), one can show that the series $\sum_{k=0}^\infty\Delta^k a(x)\frac{t^k}{k!}$ is
converge absolutely and uniformly in $(-\delta,0]\times B_R(p)$ for any given $R\geq 1$. With the same arguments, we can obtain that
\[
\sum_{k=0}^\infty\Delta^{k+1} a(x)\frac{t^k}{k!}, \ \ \sum_{k=0}^\infty\Delta^k a(x)\frac{\partial_t t^k}{k!}
\]
are also converge absolutely and uniformly in $(-\delta,0]\times B_R(p)$.
Let
\[
v(x,t)=\sum_{k=0}^\infty\Delta^k a(x)\frac{t^k}{k!}.
\]
We claim that $v(x,t)$ is a solution of the heat equation for $t\in (-\delta,0]$.
Indeed,
\begin{align*}
\partial_t v(x,t)
&=\sum_{k=0}^\infty\Delta^k a(x)\frac{\partial_t t^k}{k!}\\
&=\sum_{k=0}^\infty \Delta^{k+1} a(x)\frac{t^k}{k!}
=\Delta v(x,t).
\end{align*}
Moreover, for $t\in (-\delta,0]$ and $\delta<\frac{1}{2De}$, we have
\begin{align*}
|v(x,t)|
&=\sum_{k=0}^\infty|\Delta^k a(x)|\frac{|t|^k}{k!}\\
&\leq \sum_{k=0}^\infty A_3 e^{A_4(k+d)\ln(k+d)}\frac{(2D\delta)^k}{k!}.
\end{align*}
Here we denote $d(x,p)$ by $d$ for convenience.
Since $A_4<1$, we choose a constant $\epsilon>0$ such that $A_4(1+2\epsilon)<1$.  Let
\[R_0=\max\left\{\frac{\ln(1+\epsilon)}{2\epsilon}, \frac{1}{\epsilon}k_0, \frac{1}{\epsilon}2^{\frac{1}{2\epsilon}}\right\}.\]
Since $f(x)=x\ln x$ is a convex function on $(0, +\infty)$,
\begin{align}\begin{split}\label{convex}
(k+d)\ln (k+d)\leq k\ln (2k)+d\ln(2d)=k\ln k+k\ln 2+d\ln(2d).
\end{split}\end{align}
Hence, by (\ref{k!-est}) and (\ref{convex}),
\begin{align*}
|v(x,t)|
&\leq \left(\sum_{k<\epsilon R_0}+\sum_{k\geq \epsilon R_0}\right) A_3 e^{A_4(k+d)\ln(k+d)}\frac{(2D\delta)^k}{k!}\\
&\leq A_3e^{A_4(\epsilon R_0+d)\ln(\epsilon R_0+d)}\sum_{k<\epsilon R_0}\frac{(2D\delta)^k}{k!} \\
&\ \ \ \ +A_3e^{A_4d\ln(2d)}\sum_{k\geq \epsilon R_0}e^{A_4k\ln k+A_4k\ln 2}\frac{(2D\delta e)^k}{k^k}\\
&:=I+II. 
\end{align*}
We divide it into two cases.

Case 1. $d(x,p)\leq R_0$. Since $A_4(1+2\epsilon)\leq 1$,
\begin{align*}
I&\leq A_3 e^{A_4(1+\epsilon)R_0\ln[(1+\epsilon)R_0]}\sum_{k<\epsilon R_0}\frac{(2D\delta)^k}{k!}\\
  &\leq A_3 e^{R_0\ln[(1+\epsilon)R_0]}\sum_{k<\epsilon R_0}\frac{1}{k!}=C_1(\epsilon)
\end{align*}
and
\begin{align*}
II
&\leq A_3e^{A_4R_0\ln (2R_0)}\sum_{k\geq\epsilon R_0} e^{-(1-A_4)k\ln k+ A_4k\ln2}(2D\delta e)^k\\
&\leq A_3e^{R_0\left(1+\ln R_0\right)}\sum_{k\geq\epsilon R_0}(2D\delta e)^k\\
&=C_2(\epsilon, D,\delta),
\end{align*}
where we used $1-A_4=2A_4\epsilon$ and $k\geq \epsilon R_0\geq 2^{\frac{1}{2\epsilon}}$ yields
\[-(1-A_4)\ln k+\ln2\leq 0.\]

Case 2. $d(x,p)> R_0$. By $2D\delta\leq 2D\delta e<1$ and $d>R_0\geq \frac{\ln(1+\epsilon)}{2\epsilon}$, we have
\begin{align*}
I&\leq A_3e^{A_4(1+\epsilon)d\ln(1+\epsilon)d}\sum_{k<\epsilon R_0}\frac{1}{k!}\\
 &\leq C_1(\epsilon)e^{A_4(1+2\epsilon)d\ln d}
\end{align*}
and
\begin{align*}
II
&\leq A_3 e^{A_4 d\ln(2d)}\sum_{k\geq\epsilon R_0}e^{-2A_4\epsilon k\ln k+A_4k\ln 2}(2D\delta e)^k\\
&\leq A_3 e^{A_4(1+2\epsilon ) d\ln d}\sum_{k\geq\epsilon R_0}(2D\delta e)^k\\
&=C_3(\epsilon, D, \delta) e^{A_4(1+2\epsilon ) d\ln d},
\end{align*}
where we used $1-A_4=2A_4\epsilon$, $k>\epsilon R_0\geq 2^{\frac{1}{2\epsilon}}$ implies that
\[
-(1-A_4)\ln k+\ln2\leq 0.
\]

Let $\tilde{A_4}=A_4(1+2\epsilon)$ and $\tilde{A_3}=\max\{C_1(\epsilon)+C_2(\epsilon, D, \delta), C_3(\epsilon, D, \delta)\}$. Combining Case 1 and Case 2, we obtain
\[
|v(x,t)|\leq I+II\leq \tilde{A_3}e^{\tilde{A_4} d(x,p)\ln d(x,p)}
\]
with $\tilde{A_4}<1$.
Thus $v(x,-t)$ is a solution to the Cauchy problem of the backward equation (\ref{backward-heat-equ}) and (\ref{linear-exponent}).
\end{proof}
\section{A sharpness example}\label{sec:sharp}
In this section, we construct an example to illustrate that the assumption $A_2+A_3\leq 1$ in Theorem \ref{main-result} is sharp.
We equip $\mathbb Z$ with a graph structure $(\mathbb Z, \mu_x, \omega_{xy})$ such that $\mu_x\equiv 1$ and for $x,y\in \mathbb Z$,
\[|x-y|=1 \Leftrightarrow x\sim y.\]
For the weighted graph $(\mathbb Z, \mu_x, \omega_{xy})$, Huang \cite[section 3]{Huang12} constructed a solution of heat equation with exponential growth. Making a slight modification, we construct an ancient solution (in fact entire solution) on $(\mathbb Z, \mu_x, \omega_{xy})$ with bounded Laplacian.
\begin{prop}
Let $(\mathbb Z, \mu_x, \omega_{xy})$ be a weighted graph with $\mu_x\equiv 1$ and $\omega_{xy}=1$ holds for $x\sim y$. For any $\epsilon>0$, there exists a constant $R_0=R_0(\epsilon)>0$ and an ancient solution of heat equation, $u(x,t)$, such that
\[
|u(x,t)|\leq Ae^{(1+\epsilon)x\ln x}
\]
holds for $x\geq R_0$ and some constant $A>0$, meantime, $u(x,t)$ is not time analytic.
\end{prop}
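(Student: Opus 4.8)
The plan is to build $u$ as a superposition of the elementary exponentially-growing solutions of the discrete heat equation on $\mathbb{Z}$, with frequencies tending to infinity, choosing the coefficients so that the solution sits exactly on the critical order $A_2=1$. For each $\theta>0$ the function $v_\theta(x,t):=\cosh(\theta x)\,e^{(2\cosh\theta-2)t}$ solves $\partial_t v=\Delta v$ on $\mathbb{Z}\times\mathbb{R}$, since $\cosh(\theta(x+1))+\cosh(\theta(x-1))-2\cosh(\theta x)=(2\cosh\theta-2)\cosh(\theta x)$, and $0<v_\theta(x,t)\le e^{\theta|x|}$ whenever $t\le 0$. Fixing $\epsilon>0$ and setting $c:=1/(1+\epsilon/2)$, so that $1/(1+\epsilon)<c<1$, I would define
\[
u(x,t):=\sum_{n=1}^{\infty}e^{-e^{cn}}\cosh(nx)\,e^{(2\cosh n-2)t}.
\]
Because $e^{-e^{cn}}$ decays faster than $e^{-nN}$ for every fixed $N$, this series together with all of its termwise $t$-derivatives converges absolutely and uniformly on $B_N(p)\times[-T,0]$ for every $N,T>0$ (with $p=0$); hence $u$ is well defined on $\mathbb{Z}\times(-\infty,0]$, is $C^\infty$ in $t$, and $\partial_t u=\Delta u$ there, i.e. $u$ is a genuine ancient solution with $T=+\infty$. (Since $c<1$ the series diverges for $t>0$, but this is irrelevant to the statement, which concerns ancient solutions.)

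For the growth bound, for $x\ge 1$ and $t\le 0$ one has $|u(x,t)|\le\sum_{n\ge 1}e^{\phi(n)}$ with $\phi(n):=nx-e^{cn}$, a concave function of $n$ maximized at $n_*=\tfrac1c\ln(x/c)$, where $\phi(n_*)=\tfrac xc\big(\ln(x/c)-1\big)$, which is at most $(1+\epsilon)x\ln x$ once $x\ge R_0(\epsilon)$. A Laplace-type estimate — the terms with $n$ near $n_*$ sum to a bounded geometric-type tail after expanding $\phi$ around $n_*$, and those far from $n_*$ are negligible — gives $\sum_{n\ge 1}e^{\phi(n)}\le A\,e^{\phi(n_*)}$ with $A$ an absolute constant, whence $|u(x,t)|\le A\,e^{(1+\epsilon)x\ln x}$ for all $t\le 0$ and all $x\ge R_0(\epsilon)$; by the evenness of $u$ in $x$ the same bound holds for $x\le-R_0$.

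For the failure of time analyticity, consider the slice $g(t):=u(0,t)=\sum_{n\ge 1}e^{-e^{cn}}e^{(2\cosh n-2)t}$, which is $C^\infty$ on $(-\infty,0]$ and satisfies $g^{(k)}(0)=\sum_{n\ge 1}e^{-e^{cn}}(2\cosh n-2)^{k}$, a series of positive terms. Bounding it from below by the single term with $n$ nearest to $\tfrac1c\ln(k/c)$ and using $2\cosh n-2\ge\tfrac12 e^{n}$ for $n\ge 2$, one gets $g^{(k)}(0)\ge e^{\frac1c k\ln k-C_1 k}$ for a constant $C_1=C_1(\epsilon)$. Since $1/c>1$, it follows that $\big(g^{(k)}(0)/k!\big)^{1/k}\to\infty$, so the Taylor series of $g$ at $t=0$ has radius of convergence $0$; hence $g$, and therefore $u$, is not time analytic. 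As $A_3=0$ here while the spatial growth order is $A_2=1+\epsilon$, this shows the hypothesis $A_2+A_3\le 1$ of Theorem~\ref{main-result} cannot be relaxed.

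The heart of the matter — and the step I expect to require the most care — is the simultaneous calibration of the two estimates: the coefficient decay $e^{-e^{cn}}$ must be slow enough ($c<1$) that $g^{(k)}(0)$ outgrows $k!$, yet fast enough ($1/c<1+\epsilon$) that the spatial growth stays below $e^{(1+\epsilon)x\ln x}$; the admissible window $c\in\big(1/(1+\epsilon),1\big)$ is nonempty precisely because $\epsilon>0$, which is exactly the sharpness phenomenon. One should also double-check that doubly-exponential decay in the frequency index is forced: polynomial-in-$n$ or stretched-exponential coefficients make $u(x,0)$ grow only like a power of $x$ (and $u$ stays analytic), while coefficients decaying faster than $e^{-e^{n}}$ push the solution back into the entire, analytic regime. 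The only genuinely computational point is the Laplace-type bound $\sum_n e^{\phi(n)}\le A e^{\phi(n_*)}$ with $A$ uniform in $x$, which is routine.
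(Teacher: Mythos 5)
Your construction is correct, but it is genuinely different from the one in the paper. The paper adapts Huang's discrete Tikhonov-type example: starting from the flat function $g(t)=e^{-t^{-\beta}}$ (extended by $0$ for $t\le 0$), it builds $v(x,t)=\sum_k \frac{g^{(k)}(t)}{(2k)!}(x+k)\cdots(x-k+1)$, a solution that vanishes identically for $t\le 0$ yet is positive afterwards; after translating in time, all $t$-derivatives at $t_0=-T$ vanish, so the Taylor series there \emph{converges} but to the wrong function. You instead superpose the separated solutions $\cosh(nx)e^{(2\cosh n-2)t}$ with lacunary weights $e^{-e^{cn}}$, $1/(1+\epsilon)<c<1$, and show that the Taylor coefficients at $t=0$ satisfy $\bigl(g^{(k)}(0)/k!\bigr)^{1/k}\to\infty$, so the Taylor series \emph{diverges} for every $t\ne 0$. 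Both routes legitimately refute time analyticity and both calibrate the same threshold (your exponent $1/c=1+\epsilon/2$ versus the paper's $1+1/\beta$), so each establishes sharpness of $A_2+A_3\le 1$; your example has the added features of being defined for all $t\le 0$ (an ancient solution with $T=\infty$) and of failing analyticity through coefficient growth rather than through flatness, while the paper's is arguably more striking as a $C^\infty$-flat nonzero solution. Your key estimates check out: the concavity of $\phi(n)=nx-e^{cn}$ does give $\sum_n e^{\phi(n)}\le A(\epsilon)e^{\phi(n_*)}$ with geometric tails on both sides of $n_*$ (note $A$ depends on $\epsilon$ through $c$, not an absolute constant, which is harmless), and the divergence of the Taylor series follows since its terms do not tend to zero. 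The only inaccuracy is in your closing heuristic: stretched-exponential or polynomial coefficient decay does not yield merely polynomial spatial growth (polynomial coefficients make $\sum_n a_n e^{nx}$ diverge, and $e^{-n^{3/2}}$-type decay gives superpolynomial growth); but this aside plays no role in the proof.
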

\begin{proof}
Let
\begin{align*}
g(t)=
\begin{cases}
\exp(-t^{-\beta}),\ \ &t>0,\\
0, & t\leq 0,
\end{cases}
\end{align*}
where $\beta$ is a constant. Clearly, all orders of derivatives of $g(t)$ goes to zero at zero.
For $0<T<+\infty$, we define a function $u(x,t)$ on $\mathbb Z\times [0,T]$ as follows:
\begin{align}\begin{split}\label{u(x,t)}
v(x,t)=
\begin{cases}
g(t), \ \ \ \ \ \ \ \ \ \ \ \ \ \ \ \ & x=0,\\
g(t)+\sum_{k=1}^\infty\frac{g^{(k)}(t)}{(2k)!}(x+k)\cdots (x+1)x\cdots (x-k+1),  & x\geq 1,\\
v(-x-1,t),  &x\leq -1.
\end{cases}
\end{split}\end{align}
Note that the function, $(x+k)\cdots (x+1)x\cdots (x-k+1)$, analogues to the power $x^{2k}$ in continuous case, is vanishing for all $k>|x|$. Recall the heat equation has a simple form as follows:
\[
\frac{d}{dt}v(x,t)+2v(x,t)-v(x-1,t)-v(x+1,t)=0.
\]
We can check that $u(x,t)$ solves the heat equation by direct computation.
Huang tells us that
\[
|g^{(k)}(t)|\leq k!\left(\frac{2k}{e\beta\theta^\beta}\right)^\frac{k}{\beta}
\]
for some $k>0$ and all $0<\theta<1$ small enough. Hence, by $|g(t)|\leq 1$,
\[
|v(x,t)|\leq 1+\sum_{k=1}^xb_k,
\]
where
\[b_k=\frac{k!}{(2k)!}(x+k)\cdots (x+1)n\cdots (x-k+1)\left(\frac{2k}{e\beta\theta^\beta}\right)^\frac{k}{\beta}.\]
By Lemma 3.2 in \cite{Huang12} we know that $b_k\leq b_{k+1}$. By Stirling's approximation, there exists a number $k_0>0$ such that
\be
k!\leq 2\sqrt{2\pi k}\left(\frac{k}{e}\right)^k
\qe
holds for $k\geq k_0$.
Set $C_0=\frac{1}{\theta}\left(\frac{2}{\beta}\right)^\frac{1}{\beta}$. We choose a constant $\theta$ such that $\theta$ satisfying $0<\theta<\min\left\{1, \left(\frac{2}{\beta}\right)^\frac{1}{\beta}\right\}$. This indicates that $C_0> 1$.
Then for all $x\geq \max\{k_0, e\}$, $t>0$,
\begin{align*}
|v(x,t)|&\leq 1+xb_x\\
        &= 1+x\frac{x!}{(2x)!}(2x)!\left(\frac{2x}{e\beta\theta^\beta}\right)^\frac{x}{\beta}\\
        &\leq 1+2x\sqrt{2\pi x}\left(\frac{x}{e}\right)^x \left[\left(\frac{x}{e}\right)^\frac{1}{\beta}C_0\right]^x\\
        &\leq 4\sqrt{2\pi}x^{(1+\frac{1}{\beta})x+\frac{3}{2}}e^{-(1+\frac{1}{\beta})x}C_0^x\\
        &\leq 4\sqrt{2\pi}e^{(1+\frac{1}{\beta})x\ln x+\left(\frac{3}{2}-(1+\frac{1}{\beta})+\ln C_0\right)x}.
%        &\leq 12e^{(1+\frac{1}{2\beta})x\ln x}.
        %&=1+ e^{(1+\frac{1}{\beta})x(\ln x-1)+\frac{3}{2}\ln x} \sqrt{2\pi}\left(\frac{2}{\beta\theta^\beta}\right)^\frac{x}{\beta}\\
        %&\leq Ae^{(1+\frac{1}{\beta})(1+\epsilon)x \ln x }
\end{align*}
Hence, for any $\epsilon>0$, $\beta>\max\{1, \frac{2}{\epsilon}\}$, set $R_0=\max\{e,k_0, e^{\frac{2}{\epsilon}\left[\frac{3}{2}-(1+\frac{1}{\beta})+\ln C_0\right]}\}$, we have
\[
|v(x,t)|\leq 4\sqrt{2\pi}e^{(1+\epsilon)x\ln x}
\]
holds for $x\geq R_0$.
Let
\[
u(x,t)=v(x,t+T).
\]
Then $u(x,t)$ is a heat solution on $V\times [-T,0]$.
If $u(x,t)$ is time analytic, then at $t_0=-T$,
\[
u(x,t)=\sum_{j=0}^\infty \partial^j_t u(x,-T)\frac{(t+T)^j}{j!}.
\]
Since $\partial^j_t u(x,-T)=\partial^j_t v(x,0)=0$ for any $j\geq 0$, $u(x,t)\equiv 0$ at $\mathbb Z\times [-T, -T+\delta)$, this contradicts to $u(x,t)=v(x,t+T)>0$ for $t>-T$ by (\ref{u(x,t)}).
Hence, $u(x,t)$ is not time analytic.
\end{proof}
\section*{Acknowledgements}
The authors would like to thank Professor Qi S. Zhang for his helpful suggestions and comments. B.H. is supported by NSFC, no.11831004 and no.11826031. L. W. is supported by NSFC, no.11671141 and China Postdoctoral Science Foundation, no. 2019M651332.

\bibliographystyle{alpha}
\bibliography{Ancient-solution}

\end{document}